\def \de {\partial}
\def \phi {\varphi}
\def \RNu {\mathbb{R}^{N+1}}
\def \RN {\mathbb{R}^N}
\def \R {\mathbb{R}}
\def \K {\mathscr{K}}
\def \G{\Gamma}
\def \vf{\varphi}
\def \S {\mathscr{S}(\R^{N+1})}
\def \So {\mathscr{S}}
\newcommand{\sA}{\mathscr A}
\newcommand{\Bpa}{\mathfrak B^{\alpha,p}\left(\RNu\right)}
\newcommand{\Rn}{\mathbb R^n}
\newcommand{\Om}{\Omega}
\newcommand{\p}{\partial}
\numberwithin{equation}{section}
\newcommand{\beq}{\begin{equation}}
\newcommand{\bea}[1]{\begin{array}{#1} }
\newcommand{\eeq}{ \end{equation}}
\newcommand{\ea}{ \end{array}}
\newcommand{\Ks}{\left(-\K\right)^s}
\def \tr{\mathrm{tr}}
\newtheorem{theorem}{Theorem}[section]
\newtheorem{proposition}[theorem]{Proposition}
\newtheorem{definition}[theorem]{Definition}
\numberwithin{equation}{section}
\begin{document}

\title[A chain rule, etc.]{A chain rule for a class of evolutive nonlocal hypoelliptic equations}

\subjclass[2010]{}
\keywords{Kolmogorov equations, hypoelliptic operators of H\"ormander type, nonlocal equations, chain rule}

\date{}

\begin{abstract}
We prove a chain rule of local type for a class of fractional hypoelliptic equations of Kolmogorov-Fokker-Planck type. We introduce a  semigroup based notion of nonlocal \emph{carr\'e du champ} which works successfully in situations in which the infinitesimal generator of the semigroup itself does not necessarily possess a gradient. Our results extend and sharpen the original 2004 chain rule due to A. C\'ordoba and D. C\'ordoba.
\end{abstract}

\author{Federico Buseghin}
\address{Dipartimento d'Ingegneria Civile e Ambientale (DICEA)\\ Universit\`a di Padova\\ Via Marzolo, 9 - 35131 Padova,  Italy}

\vskip 0.2in

\email{federico.buseghin@studenti.unipd.it}

\author{Nicola Garofalo}

\address{Dipartimento d'Ingegneria Civile e Ambientale (DICEA)\\ Universit\`a di Padova\\ Via Marzolo, 9 - 35131 Padova,  Italy}

\vskip 0.2in

\email{nicola.garofalo@unipd.it}

\thanks{The second author was supported in part by a Progetto SID (Investimento Strategico di Dipartimento) ``Non-local operators in geometry and in free boundary problems, and their connection with the applied sciences", University of Padova, 2017.}

\maketitle


\section{Introduction and statements of the results}\label{S:intro}

The chain rule for the standard Laplacian states that, given a function $\vf\in C^2(\R)$ and a function $u\in C^2(\Om)$, $\Om\subset \Rn$ an open set, then 
\begin{equation}\label{1}
\Delta \vf(u) = \vf'(u) \Delta u + \vf''(u) |\nabla u|^2.
\end{equation}
As it is well-known, such property, which extends to more general second-order differential operators with non-smooth coefficients, plays a central role in the study of the regularity properties of generalised solutions, see \cite{DG}, \cite{N} and \cite{Mo}, \cite{Mo2}. It is obvious that if $\vf$ is convex, then \eqref{1} implies
\begin{equation}\label{2}
- \Delta \vf(u) \le  \vf'(u) (-\Delta u).
\end{equation}
Consider now the fractional Laplacian defined by 
\begin{equation}\label{laps}
(-\Delta)^s u(x) = \frac{\gamma(n,s)}{2} \int_{\Rn} \frac{2 u(x) - u(x+y) - u(x-y)}{|y|^{n+2s}} dy,
\end{equation}
with normalisation constant given by
\begin{equation}\label{gnsfin}
\gamma(n,s) = \frac{s 2^{2s} \G\left(\frac{n+ 2s}{2}\right)}{\pi^{\frac n2} \G(1-s)}.
\end{equation} 
In \cite[Theorem 1]{CC} the authors proved for this pseudo-differential operator the following  inequality:
\begin{equation}\label{cc}
(-\Delta)^s u^2 \le 2 u (-\Delta)^s u,
\end{equation}
for any $u\in \mathscr S(\Rn)$.
They also presented some important applications of \eqref{cc} to time-decay estimates for viscosity solutions of quasi-geostrophic equations. In the paper \cite{CM} the authors generalised the inequality \eqref{cc} to any convex function $\vf\in C^1(\R)$ and to the fractional powers of the Laplacian on a compact manifold $M$. Precisely, they showed that for any $u\in C^\infty(M)$ one has
 \begin{equation}\label{3}
(-\Delta)^s \vf(u) \le \vf'(u) (-\Delta)^s u,
\end{equation} 
where now $(-\Delta)^s$ is suitably defined.
We quote from \cite{CM}: ``Despite its apparent simplicity its validity is quite surprising given the non-local character of the involved operators".  The inequality \eqref{3} represents a nonlocal version of \eqref{2} and, similarly to its local counterpart, it plays an important role in many problems from the applied sciences involving $(-\Delta)^s$. See for instance the works \cite{CC2}, on the two-dimensional quasi-geostrophic equation, and \cite{CV}, on nonlinear evolution equations with fractional diffusion. 

In this note we generalise these results to the fractional powers of a large class of evolutive hypoelliptic equations and show that, in fact, we can improve on the inequality \eqref{3} and obtain an equality similar to \eqref{1} above. We achieve this by introducing a  semigroup based notion of nonlocal \emph{carr\'e du champ} which works successfully in situations in which the infinitesimal generator of the semigroup itself does not necessarily possess a gradient. A prototypical example of what we have in mind is given by the situation when $\vf(t) = t^2$ treated in \cite{CC}. Given a function $u\in \mathscr S(\Rn)$
consider the Aronszajn-Gagliardo-Slobedetzky $s-$\emph{energy} of $u$ 
\begin{equation}\label{Es}
\mathscr E_{(s)}(u) = \frac{\gamma(n,s)}{4} \int_{\Rn}\int_{\Rn} \frac{(u(x) - u(y))^2}{|x-y|^{n + 2s}} dy dx,
\end{equation}
which defines membership in the fractional Sobolev space $W^{s,2}(\Rn)$, see e.g. \cite{DPV}. The relevance of \eqref{Es} is underscored by the fact that the nonlocal equation $(-\Delta)^s u = 0$ is the Euler-Lagrange equation of the functional $u\to \mathscr E_{(s)}(u)$. Given $u\in \mathscr S(\Rn)$, we have in fact for every $\vf\in \mathscr S(\Rn)$
\begin{equation*}\label{el}
\frac{d}{dt} \mathscr E_{(s)}(u+t \vf)\big|_{t=0}  =  \int_{\Rn}  (-\Delta)^s u(x) \vf(x) dx.
\end{equation*}
This shows that $u$ is a critical point of $\mathscr E_{(s)}$ if and only if $(-\Delta)^s u = 0$. Now, the definition \eqref{Es} of the energy suggests to define for every $x\in \Rn$ the quantity
\begin{equation}\label{gamma2}
\Gamma_{(s)}(u)(x) = \frac{\gamma(n,s)}{2} \int_{\Rn} \frac{(u(x) - u(y))^2}{|x-y|^{n + 2s}} dy.
\end{equation}
One notable property of \eqref{gamma2}  is that it represents a nonlocal version of the P.A. Meyer \emph{carr\'e du champ} which is at the basis of the Bakry-\'Emery gamma calculus. One has in fact the remarkable identity (see \cite[Lemma 20.2]{Gft}) 
\begin{equation}\label{carre}
\Gamma_{(s)}(u) = - \frac 12 [(-\Delta)^s(u^2) - 2 u (-\Delta)^s u].
\end{equation}
Notice that, since from \cite[Lemma 2.3]{B} we know that $\underset{s\to 1^-}{\lim} (-\Delta)^s u(x) = - \Delta u(x)$, a direct consequence of \eqref{carre} is that 
\begin{equation}\label{lim}
\underset{s\to 1^-}{\lim} \Gamma_{(s)}(u) = - \frac 12 \underset{s\to 1^-}{\lim} \left[(-\Delta)^s(u^2)  - 2 u (-\Delta)^s u\right] = - \frac 12 [-\Delta(u^2) + 2 u \Delta u] =  |\nabla u|^2,
\end{equation}
so that $\Gamma_{(s)}(u)$ provides indeed a good nonlocal length of the ``gradient". But the most striking consequence of the identity \eqref{carre} is that it gives the following sharper version of the chain rule \eqref{cc}.

\begin{proposition}\label{P:ft}
Let $0<s<1$. For every $u\in \mathscr S(\Rn)$ one has
\begin{equation}\label{gu}
(-\Delta)^s(u^2)  = 2 u (-\Delta)^s u - 2 \Gamma_{(s)}(u).
\end{equation}
\end{proposition}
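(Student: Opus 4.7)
The plan is a direct pointwise computation that reduces \eqref{gu} to an elementary algebraic identity under the integral sign. Starting from the defining integrals \eqref{laps} for $(-\Delta)^s$ and \eqref{gamma2} for $\Gamma_{(s)}(u)$, I would first rewrite $\Gamma_{(s)}(u)$ in a symmetrised form compatible with \eqref{laps}. The change of variable $y\mapsto x+y$ in \eqref{gamma2}, followed by averaging with $y\mapsto -y$ (which preserves the measure $dy/|y|^{n+2s}$), gives
\begin{equation*}
\Gamma_{(s)}(u)(x) = \frac{\gamma(n,s)}{4}\int_{\Rn} \frac{(u(x)-u(x+y))^2 + (u(x)-u(x-y))^2}{|y|^{n+2s}}\,dy.
\end{equation*}

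The heart of the proof is then the purely algebraic identity
\begin{equation*}
2u(x)^2 - u(x+y)^2 - u(x-y)^2 = 2u(x)\bigl[2u(x) - u(x+y) - u(x-y)\bigr] - (u(x)-u(x+y))^2 - (u(x)-u(x-y))^2,
\end{equation*}
which is verified by expanding the two squares on the right-hand side. Multiplying through by $\gamma(n,s)/(2|y|^{n+2s})$ and integrating over $\Rn$ in the variable $y$, the left-hand side produces $(-\Delta)^s(u^2)(x)$ by \eqref{laps} applied to $u^2$; the first term on the right produces $2u(x)(-\Delta)^s u(x)$ by \eqref{laps} applied to $u$; and the sum of the last two terms produces $-2\Gamma_{(s)}(u)(x)$ in view of the symmetric representation displayed above. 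This gives \eqref{gu}.

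There is essentially no substantive obstacle: for $u \in \mathscr S(\Rn)$, a second-order Taylor expansion shows $2u(x)-u(x+y)-u(x-y) = O(|y|^2)$ and $(u(x)-u(x\pm y))^2 = O(|y|^2)$ as $y\to 0$, so each integrand involved is $O(|y|^{2-n-2s})$ near the origin, hence integrable for $0<s<1$; the rapid decay of $u$ and $u^2$ at infinity makes integrability at infinity automatic. These bounds both justify the absolute convergence of each of the three integrals and license splitting the integrand on the left-hand side of the algebraic identity into the three separately-integrable pieces on the right.
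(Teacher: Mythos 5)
Your proof is correct, and the algebraic identity at its heart checks out: with $a=u(x)$, $b=u(x+y)$, $c=u(x-y)$, the claimed identity is $2a^2-b^2-c^2 = 2a(2a-b-c)-(a-b)^2-(a-c)^2$, which is immediate upon expansion. The symmetrised representation of $\Gamma_{(s)}(u)$ is also right, and the integrability remarks correctly justify splitting the integral. However, you take a genuinely different route from the paper. The paper never proves Proposition \ref{P:ft} directly: it states the identity \eqref{carre} as known (citing \cite[Lemma 20.2]{Gft}), and its own framework derives the result by specialising the semigroup-based chain rule of Propositions \ref{P:evcr}/\ref{P:remK} (where $\vf(t)=t^2$ makes $\vf''$ constant, so the remainder $\mathscr R^\K_{(s)}$ vanishes identically) together with Proposition \ref{P:tind}, which identifies $\Gamma^\K_{(s)}$ with $\Gamma_{(s)}$ in the time-independent heat case. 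Your approach is a direct, elementary computation with the explicit singular-integral kernel of $(-\Delta)^s$, which is cleaner and entirely self-contained for this particular operator; the paper's semigroup route is deliberately chosen because it generalises to the hypoelliptic Kolmogorov--Fokker--Planck setting, where no comparable pointwise kernel representation for $(-\K)^s$ is available and the \emph{carr\'e du champ} must be defined via the semigroup itself.
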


In view of \eqref{gamma2}, it is clear that \eqref{gu} trivially implies \eqref{cc}. It is also clear from \eqref{lim} that, if in \eqref{gu} we pass to the limit as $s\nearrow 1^-$, we recover the local identity
\[
\Delta u^2 = 2 u \Delta u + 2 |\nabla u|^2,
\]
which corresponds to the case $\vf(t) = t^2$ of \eqref{1}. 

The first question that we address in the present paper is to what extent Proposition \ref{P:ft} continues to be valid when $\vf(t) = t^2$ is replaced by a generic function $\vf$. Because of the nonlocal nature of $(-\Delta)^s$, we should not expect formula \eqref{gu} to generalise exactly, there is a tail. However, such tail vanishes in the limit as $s\nearrow 1^-$. One has in fact the following result.

\begin{proposition}\label{P:rem}
Let $U\subset \R$ be an interval and suppose that $\vf\in C^{1,1}(U) \cap C^{2,\alpha}_{loc}(U)$, for some $\alpha>0$. For any function $u\in \mathscr S(\Rn)$ such that $u(\Rn)\subset U$, one has
\begin{equation}\label{gugen}
(-\Delta)^s \vf(u(x))  = \vf'(u(x)) (-\Delta)^s u(x) - \vf''(u(x)) \Gamma_{(s)}(u)(x) + \mathscr R_{(s)}(u;\vf)(x),
\end{equation}
where for any $x\in \Rn$ we have 
\begin{equation}\label{limrem}
\underset{s\to 1^-}{\lim} \mathscr R_{(s)}(u;\vf)(x) = 0.
\end{equation}
If $\vf(t) = a t^2 + bt + c$, then we have $\mathscr R_{(s)}(u;\vf) \equiv 0$.
As a consequence of \eqref{gugen}, \eqref{limrem}, we obtain 
\begin{equation}\label{locals}
- \underset{s\to 1^-}{\lim} (-\Delta)^s \vf(u(x))  =  \vf'(u(x)) \Delta u(x) + \vf''(u(x)) |\nabla u(x)|^2.
\end{equation} 
\end{proposition}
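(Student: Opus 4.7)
The plan is a second-order Taylor expansion of $\vf$ about $u(x)$, followed by the observation that the two principal pieces reconstruct the two terms on the right-hand side of \eqref{gugen}. Specifically, for each $x,y\in\Rn$ I would write
\begin{equation*}
\vf(u(x\pm y)) = \vf(u(x))+\vf'(u(x))\bigl(u(x\pm y)-u(x)\bigr)+\tfrac12\vf''(u(x))\bigl(u(x\pm y)-u(x)\bigr)^{2}+r_{\pm}(x,y).
\end{equation*}
The assumption $\vf\in C^{1,1}(U)\cap C^{2,\alpha}_{\mathrm{loc}}(U)$, together with the boundedness of $u\in\mathscr S(\Rn)$ inside $U$, yields the uniform control
\begin{equation*}
|r_{\pm}(x,y)|\le C\min\bigl(|u(x\pm y)-u(x)|^{2+\alpha},\,|u(x\pm y)-u(x)|^{2}\bigr)\le C\min(|y|^{2+\alpha},1),
\end{equation*}
which, combined with the standard $|y|^{-n-2s}$ scaling, guarantees absolute convergence of all integrals that follow.

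Next I would form $2\vf(u(x))-\vf(u(x+y))-\vf(u(x-y))$ from the two Taylor identities, multiply by $\gamma(n,s)/(2|y|^{n+2s})$ and integrate in $y\in\Rn$. The $\vf'$-line reconstructs $\vf'(u(x))(-\Delta)^{s}u(x)$ by \eqref{laps}; the $\vf''$-line, after the change of variable $y\mapsto -y$ in one of the two quadratic summands and an appeal to the definition \eqref{gamma2}, contributes $-\vf''(u(x))\Gamma_{(s)}(u)(x)$; what is left over is precisely
\begin{equation*}
\mathscr R_{(s)}(u;\vf)(x):=-\frac{\gamma(n,s)}{2}\int_{\Rn}\frac{r_{+}(x,y)+r_{-}(x,y)}{|y|^{n+2s}}\,dy,
\end{equation*}
so that \eqref{gugen} is simply bookkeeping. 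The case $\vf(t)=at^{2}+bt+c$ is then immediate: the second-order Taylor polynomial is exact, hence $r_{\pm}\equiv 0$ and $\mathscr R_{(s)}(u;\vf)\equiv 0$.

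To prove \eqref{limrem} my plan is indirect: rather than trying to bound the remainder integral by hand, I would rewrite \eqref{gugen} as $\mathscr R_{(s)}(u;\vf)(x)=(-\Delta)^{s}\vf(u(x))-\vf'(u(x))(-\Delta)^{s}u(x)+\vf''(u(x))\Gamma_{(s)}(u)(x)$ and pass to the limit $s\nearrow 1^{-}$ termwise. By \cite[Lemma~2.3]{B} applied to $u$ and to $\vf\circ u-\vf(0)$ (the constant shift is harmless since $(-\Delta)^{s}$ annihilates constants, and it produces a bounded $C^{2,\alpha}$ function that vanishes at infinity) one obtains $(-\Delta)^{s}u(x)\to-\Delta u(x)$ and $(-\Delta)^{s}\vf(u(x))\to-\Delta \vf(u(x))$; by \eqref{lim}, $\Gamma_{(s)}(u)(x)\to|\nabla u(x)|^{2}$. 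The classical chain rule $-\Delta\vf(u)=-\vf'(u)\Delta u-\vf''(u)|\nabla u|^{2}$ then forces the right-hand side to converge to $0$, which is \eqref{limrem}; passing to the limit once more in \eqref{gugen} gives \eqref{locals}. The main technical point is the use of \cite[Lemma~2.3]{B} for $\vf\circ u$, which is bounded and $C^{2,\alpha}$ but generally not in $\mathscr S(\Rn)$; a direct quantitative estimate of the remainder would be considerably harder, because the naive pointwise bound $|r_{\pm}|=O(|y|^{2+\alpha})$, combined with the blow-up $\gamma(n,s)\sim(1-s)^{-1}$ as $s\to 1^{-}$, is too weak by itself, and one would need to exploit the genuine cancellation between $r_{+}$ and $r_{-}$ (which is $O(|y|^{4})$ for smooth $\vf$, but only conditionally small for merely $C^{2,\alpha}$ $\vf$).
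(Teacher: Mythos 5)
Your derivation of \eqref{gugen} is correct and structurally parallel to the paper's (the paper proves the evolutive Proposition \ref{P:remK} by the same Taylor expansion inside the semigroup representation, and then obtains Proposition \ref{P:rem} by specialisation via Proposition \ref{P:tind}; you work directly with the kernel form \eqref{laps}, which is a legitimate and slightly more elementary route in the time-independent case). The bound $|r_\pm(x,y)|\le C\min(|y|^{2+\alpha},1)$ and the polynomial case $\vf(t)=at^2+bt+c$ are also handled correctly.

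The real problem is your assessment of the remainder. You write that $\gamma(n,s)\sim(1-s)^{-1}$ as $s\to 1^-$ and conclude that the naive pointwise bound is "too weak by itself", forcing you into the indirect limit argument. This is backwards. From \eqref{gnsfin}, $\gamma(n,s)=\dfrac{s\,2^{2s}\Gamma\!\left(\frac{n+2s}{2}\right)}{\pi^{n/2}\Gamma(1-s)}$, and since $\Gamma(1-s)\sim(1-s)^{-1}$ as $s\to 1^-$, one has $\gamma(n,s)\sim c_n(1-s)\to 0$. (The paper uses this very fact explicitly in the proof of Proposition \ref{P:remK}: ``we have used the fact that $\Gamma(1-s)\cong 1/(1-s)$ as $s\to 1^-$''.) Consequently the direct estimate goes through with no cancellation between $r_+$ and $r_-$ needed: your own bound gives
\begin{equation*}
|\mathscr R_{(s)}(u;\vf)(x)|\le \frac{\gamma(n,s)}{2}\int_{\Rn}\frac{|r_+|+|r_-|}{|y|^{n+2s}}\,dy
\le C\,\gamma(n,s)\left(\int_{|y|<1}\frac{|y|^{2+\alpha}}{|y|^{n+2s}}\,dy+\int_{|y|\ge 1}\frac{dy}{|y|^{n+2s}}\right)
\le C\,\gamma(n,s)\,|\mathbb S^{n-1}|\left(\frac{1}{2+\alpha-2s}+\frac{1}{2s}\right)\longrightarrow 0,
\end{equation*}
because $2+\alpha-2s\ge \alpha>0$ stays bounded away from zero. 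This is precisely the mechanism in the paper's proof (the split $\tau\in(0,1)$ vs $\tau\in(1,\infty)$ playing the role of $|y|<1$ vs $|y|\ge 1$, with $\frac{s}{\Gamma(1-s)}\sim s(1-s)$ providing the decay). Because of the asymptotics error you discarded the simpler argument and replaced it with the indirect one, which genuinely requires the convergence $(-\Delta)^s(\vf\circ u)\to -\Delta(\vf\circ u)$ for a merely bounded $C^{2,\alpha}$ function outside $\mathscr S(\Rn)$. That step is salvageable (a standard splitting argument proves it), but it is a harder technical input than the direct bound, not an escape from it; and once you have the direct bound, \eqref{locals} then follows from \eqref{gugen}, \eqref{limrem}, \cite[Lemma 2.3]{B} applied to $u\in\mathscr S(\Rn)$, and \eqref{lim}, exactly as the paper intends, without ever needing Balakrishnan's lemma for $\vf\circ u$.
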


Our main objective in this note is generalising Propositions \ref{P:ft} and \ref{P:rem} to the fractional powers of the following class of nonlocal Kolmogorov-Fokker-Planck operators in $\RNu$,  recently studied in \cite{GT, GThls, GTfi, GTiso}:
\begin{equation}\label{K0}
\mathscr K u = \mathscr A u  - \de_t u \overset{def}{=} \operatorname{tr}(Q \nabla^2 u) + <BX,\nabla u> - \de_t u,
\end{equation} 
where the $N\times N$ matrices $Q$ and $B$ have real, constant coefficients, and $Q = Q^\star \ge 0$. 
We assume throughout that $N\ge 2$, and we indicate with $X$ the generic point in $\R^N$, with $(X,t)$ the one in $\RNu$. The class \eqref{K0} was introduced by H\"ormander in his celebrated 1967 hypoellipticity paper \cite{Ho}, where he proved that $\K$ is hypoelliptic if and only if the covariance matrix
\begin{equation}\label{Kt}
K(t) = \frac 1t \int_0^t e^{sB} Q e^{s B^\star} ds
\end{equation}
is invertible (i.e., strictly positive) for every $t>0$. The hypothesis $K(t)>0$ will be henceforth tacitly assumed.
We note that, in the special case when $Q = I_N$ and $B = O_N$, then \eqref{K0} becomes the standard heat operator $H = \Delta - \p_t$ in $\RNu$. One should note that, even in this seemingly simple example, one lacks an obvious notion of ``gradient". In fact, because of the evolutive nature of $H$ a tool like the P.A. Meyer \emph{carr\'e du champ} $\Gamma^H(u) = \frac 12 [H(u^2) - 2 u H u]$ is not effective here since $\G^H(u) = |\nabla u|^2$, which does not provide any control on the time variable $t$.  
The lack of a gradient is not only caused by the time variable. Even for the time-independent operator $\sA$ in \eqref{K0}, we have $\Gamma^\sA(u) = \frac 12 [\sA(u^2) - 2 u \sA u] = <Q\nabla u,\nabla u>$, and this quantity fails to control the directions of the drift, or those of non-ellipticity of $Q$, in the degenerate case.

Since the operators $\mathscr A$ and $\K$ in \eqref{K0} are not variational, with the goal of generalising Propositions \ref{P:ft} and \ref{P:rem} a crucial point is to understand what replaces the nonlocal energy $\Gamma_{(s)}$ in \eqref{gu}, \eqref{gugen}. We introduce a general notion of nonlocal energy which is exclusively semigroup based and therefore prescinds from the existence of a  ``gradient". Specialised to time-independent functions, such energy perfectly recaptures that in  \eqref{gamma2} for the fractional Laplacian, see Proposition \ref{P:tind} below.

In order to introduce the relevant notion we recall the semigroup $P_t = e^{-t \sA}$ defined on a function $f\in \So(\RN)$ by
\begin{equation}\label{Pt}
P_t f(X) = \int_{\RN} p(X,Y,t) f(Y) dY,
\end{equation}
where
\begin{equation}\label{p}
p(X,Y,t) = (4\pi)^{- \frac N2} \left(\operatorname{det}(t K(t))\right)^{-1/2} \exp\left( - \frac{<K(t)^{-1}(Y-e^{tB} X ),Y-e^{tB} X >}{4t}\right)
\end{equation}
is the fundamental solution of the operator $\K$ constructed by H\"ormander in \cite{Ho}. We recall that $\int_{\RN} p(X,Y,t) dY = 1$ for every $X\in \RN$ and $t>0$ (however, one has $\int_{\RN} p(X,Y,t) dX = e^{-t \operatorname{tr}B}$). Using $P_t$ we next consider the evolutive semigroup $P_\tau^\K$ introduced in \cite{GT}. For a function $u\in \S$, we let
\begin{equation}\label{PKtau}
P_\tau^\K u(X,t) = \int_{\RN} p(X,Y,\tau) u(Y,t-\tau) dY.
\end{equation}
For the main properties of the semigroup $\{P_\tau^\K\}_{\tau >0}$ we refer the reader to \cite{GT}. Here, we note that if for $u\in \S$ we define $U((X,t),\tau) = P^\K_\tau u(X,t)$,
then $U\in C^\infty(\RNu\times (0,\infty))$
and it solves the Cauchy problem
$$\begin{cases}
\de_\tau U = \K U \ \ \ \ \ \ \ \ \ \ \ \ \ \ \ \  \  \text{in}\ \RNu\times (0,\infty),
\\
U((X,t),0) = u(X,t)\ \ \ \ \ \ \ (X,t)\in \RNu.
\end{cases}$$
As in \cite{GT}, given $s\in (0,1)$, we now define the nonlocal operator $\Ks$ for $u \in\S$ and $(X,t)\in \RNu$, as follows
\begin{equation}\label{Ks}
\left(-\K\right)^s u(X,t) = - \frac{s}{\G(1-s)} \int_0^\infty \tau^{-1-s} \left[P^\K_\tau u(X,t)- u(X,t)\right] d\tau.
\end{equation}

 We note from \eqref{Pt}, \eqref{PKtau} that, if $u$ does not depend on $t$, i.e., $u(X,t) = v(X)$, then $P_\tau^\K u(X,t)  = P_\tau v(X)$. The next definition is central to this note. It introduces a  semigroup based notion of nonlocal \emph{carr\'e du champ} which works successfully in situations in which the infinitesimal generator of the semigroup itself does not necessarily possess a gradient. 

\begin{definition}\label{nonlocalen}
Given $s\in (0,1)$, we define the nonlocal evolutive \emph{carr\'e du champ} of a function $u:\RNu\to \R$ as 
\[
\G^\K_{(s)}(u)(X,t) = \frac{s}{2\G(1-s)}\int_0^\infty \frac{1}{\tau^{1+s}} P^\K_\tau((u - u(X,t))^2)(X,t) d\tau.
\]
\end{definition}
We notice the obvious consequence $\G^\K_{(s)}(u)(X,t)\ge 0$ of the positivity of  the semigroup $P^\K_\tau$. More importantly, the relevance of Definition \ref{nonlocalen} is connected with the following Besov spaces, see also \cite{GTfi} for the time-independent case.

\begin{definition}\label{D:besov}
For $p\geq 1$ and $\alpha\geq 0$, we define the \emph{evolutive Besov space} $\Bpa$ as the collection of those functions $u\in L^p(\RNu)$, such that the seminorm
\begin{equation*}
\mathscr N^\K_{\alpha,p}(u) = \left(\int_0^\infty  \frac{1}{\tau^{\frac{\alpha p}2 +1}} \int_{\RNu} P_\tau^\K\left(|u - u(X,t)|^p\right)(X,t) dXdt d\tau\right)^{\frac 1p} < \infty.
\end{equation*}
We endow the space $\Bpa$ with the following norm
$$||u||_{\Bpa} \overset{def}{=} ||u||_{L^p(\RNu)} + \mathscr N^\K_{\alpha,p}(u).$$
\end{definition}
We are interested in the situation in which $p = 2$, and $\alpha = s\in(0,1)$. We remark explicitly that, in such case, we have
\[
\mathscr N^\K_{s,2}(u)^{2} = \frac{2\G(1-s)}{s}\int_{\RNu} \G_{(s)}^\K(u)(X,t) dX dt.
\]
We have the following result that shows that, specialised to functions which do not depend on $t$, and to the standard heat operator,  we recover from $\G^\K_{(s)}(u)(X,t)$ the \emph{carr\'e du champ} \eqref{carre}, \eqref{gamma2}.

\begin{proposition}\label{P:tind}
Suppose that $Q = I_N$ and $B = O_N$, and thus $\K = \Delta - \p_t$, and let $u(X,t) = v(X)$. Then, 
\[
\G^\K_{(s)}(u)(X,t) = \G_{(s)}(v)(X) = \frac{\gamma(N,s)}{2} \int_{\RN} \frac{(v(X) - v(Y))^2}{|X-Y|^{N + 2s}} dY,
\]
see \eqref{gamma2}.
\end{proposition}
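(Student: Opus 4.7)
The plan is a direct computation: reduce the semigroup $P_\tau^\K$ to the classical heat semigroup, exploit the time-independence of $u$ to eliminate the convolution in $t$, and then evaluate the resulting time integral against the Gauss-Weierstrass kernel via a standard Gamma substitution.

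First, I would specialize the ingredients in \eqref{Kt}, \eqref{p}, \eqref{PKtau} to the case $Q = I_N$, $B = O_N$. Here $e^{\tau B} = I_N$ and $K(\tau) = I_N$, so the kernel becomes the Euclidean heat kernel
\[
p(X,Y,\tau) = (4\pi\tau)^{-N/2}\exp\!\left(-\frac{|Y-X|^2}{4\tau}\right).
\]
Since $u(X,t) = v(X)$ has no dependence on $t$, the shift $t - \tau$ in \eqref{PKtau} is irrelevant, and
\[
P_\tau^\K u(X,t) = \int_{\RN} p(X,Y,\tau)\, v(Y)\, dY = P_\tau v(X),
\]
where $P_\tau$ is the heat semigroup. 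Applying the same observation to the function $Y \mapsto (v(Y) - v(X))^2$ (with $v(X)$ treated as a constant parameter) gives
\[
P_\tau^\K((u - u(X,t))^2)(X,t) = \int_{\RN} (v(Y) - v(X))^2\, p(X,Y,\tau)\, dY.
\]

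Substituting this into Definition \ref{nonlocalen} and using Tonelli's theorem (the integrand is nonnegative) to exchange the $\tau$ and $Y$ integrations yields
\[
\G_{(s)}^\K(u)(X,t) = \frac{s}{2\G(1-s)(4\pi)^{N/2}} \int_{\RN} (v(Y)-v(X))^2 \int_0^\infty \tau^{-1-s-N/2} e^{-|Y-X|^2/(4\tau)}\, d\tau\, dY.
\]
The inner time integral is computed by the change of variables $r = |Y-X|^2/(4\tau)$, which gives
\[
\int_0^\infty \tau^{-1-s-N/2} e^{-|Y-X|^2/(4\tau)}\, d\tau = \frac{4^{s+N/2}\,\G\!\left(\frac{N+2s}{2}\right)}{|Y-X|^{N+2s}}.
\]

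Putting everything together, the constant factor reduces to
\[
\frac{s}{2\G(1-s)} \cdot \frac{4^{s+N/2}\,\G\!\left(\frac{N+2s}{2}\right)}{(4\pi)^{N/2}} = \frac{1}{2}\cdot\frac{s\, 2^{2s}\,\G\!\left(\frac{N+2s}{2}\right)}{\pi^{N/2}\,\G(1-s)} = \frac{\gamma(N,s)}{2},
\]
by the definition \eqref{gnsfin} of the normalization constant. This matches exactly the expression \eqref{gamma2} for $\G_{(s)}(v)(X)$, completing the proof. There is really no hard step here — the only thing to be careful about is bookkeeping of constants and confirming that the Gamma substitution correctly reproduces $\gamma(N,s)$; the nontrivial content is built into the definitions \eqref{p}, \eqref{PKtau} and \eqref{gnsfin}.
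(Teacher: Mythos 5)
Your proof is correct and follows essentially the same route as the paper's: reduce to the Gauss--Weierstrass kernel, drop the time shift since $u$ is independent of $t$, exchange the order of integration, and evaluate the $\tau$-integral by a Gamma-function substitution to recover $\gamma(N,s)/2$. The only cosmetic difference is that you make the Tonelli step and the substitution $r=|Y-X|^2/(4\tau)$ explicit, where the paper simply quotes the resulting identity.
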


With Definition \ref{nonlocalen} in hands, we are now ready to state the evolutive counterpart of Proposition \ref{P:ft}.

\begin{proposition}\label{P:evcr}
Let $u\in \S$. Then, for every $(X,t)\in \RNu$ we have
\begin{equation}\label{Ksuq}
(-\mathscr{K})^{s}(u^{2})(X,t) = 2u(X,t)(-\mathcal{K})^{s} u(X,t)-2\Gamma^\K_{(s)}(u)(X,t).
\end{equation}
In particular, the following extension of \eqref{cc} holds:
\begin{equation}\label{Ksuqle}
(-\mathscr{K})^{s}(u^{2})(X,t) \le 2u(X,t)(-\mathscr{K})^{s} u(X,t).
\end{equation}
More in general, for every convex function $\vf\in C^1(\R)$, one has
\begin{equation}\label{Ksvfle}
(-\mathscr{K})^{s} \vf(u)(X,t) \le \vf'(u)(X,t)(-\mathscr{K})^{s} u(X,t).
\end{equation}
\end{proposition}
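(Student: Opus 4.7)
The plan is to derive the identity \eqref{Ksuq} by a direct algebraic manipulation based on the definitions, then obtain \eqref{Ksuqle} as an immediate corollary of the sign of $\Gamma^\K_{(s)}$, and finally prove \eqref{Ksvfle} via Jensen's inequality applied to the probability measure $p(X,Y,\tau)dY$.

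First I would expand the quadratic $(u(Y,t-\tau) - u(X,t))^2 = u(Y,t-\tau)^2 - 2 u(X,t) u(Y,t-\tau) + u(X,t)^2$, then integrate against $p(X,Y,\tau)dY$. Crucially, the stochastic completeness $\int_{\RN} p(X,Y,\tau) dY = 1$ recorded in the introduction implies $P_\tau^\K$ fixes constants, so
\[
P_\tau^\K((u-u(X,t))^2)(X,t) = P_\tau^\K(u^2)(X,t) - 2 u(X,t) P_\tau^\K u(X,t) + u(X,t)^2.
\]
Next I would regroup the right-hand side as $[P_\tau^\K(u^2)(X,t) - u(X,t)^2] - 2u(X,t)[P_\tau^\K u(X,t) - u(X,t)]$, multiply by $\tau^{-1-s}$, integrate in $\tau\in(0,\infty)$, and multiply by $s/\Gamma(1-s)$. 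Comparing with the defining formulas \eqref{Ks} and Definition \ref{nonlocalen}, this yields exactly $2\G^\K_{(s)}(u)(X,t) = 2u(X,t)(-\K)^s u(X,t) - (-\K)^s(u^2)(X,t)$, which is \eqref{Ksuq}. The inequality \eqref{Ksuqle} is then an immediate consequence, since $\G^\K_{(s)}(u)\geq 0$ from the positivity of the semigroup $P_\tau^\K$.

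For the general convex case \eqref{Ksvfle}, the idea is to apply Jensen's inequality fiberwise in $\tau$. Since $p(X,Y,\tau)dY$ is a probability measure on $\RN$ and $\vf\in C^1(\R)$ is convex, for every $\tau>0$ and $(X,t)\in\RNu$ one has the subgradient inequality
\[
\vf(u(Y,t-\tau)) \geq \vf(u(X,t)) + \vf'(u(X,t))\bigl(u(Y,t-\tau) - u(X,t)\bigr),
\]
and integrating against $p(X,Y,\tau)\,dY$ (again using $P_\tau^\K 1 = 1$) gives
\[
P_\tau^\K \vf(u)(X,t) - \vf(u(X,t)) \;\geq\; \vf'(u(X,t))\bigl[P_\tau^\K u(X,t) - u(X,t)\bigr].
\]
Multiplying both sides by the negative weight $-\tau^{-1-s}$, integrating in $\tau\in(0,\infty)$, and multiplying by the positive constant $s/\Gamma(1-s)$ reverses the inequality and produces precisely \eqref{Ksvfle} after recognizing the definition \eqref{Ks}.

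The only subtlety I foresee is justifying the interchange of integration in $\tau$ and $Y$, and the convergence of the $\tau$-integrals near $0$ and $\infty$ for $u\in\S$; these are standard for this semigroup — polynomial decay of $P_\tau^\K u - u$ as $\tau\to 0^+$ (from smoothness of $u$ and the Cauchy problem satisfied by $P_\tau^\K u$) and boundedness as $\tau\to\infty$ (from the sup bound $\|P_\tau^\K u\|_\infty \le \|u\|_\infty$) marry up with the integrability of $\tau^{-1-s}$ away from those endpoints exactly as in \cite{GT}. Hence the Fubini exchanges needed at each step are legitimate, and the three conclusions follow cleanly.
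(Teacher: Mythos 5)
Your proof is correct. The argument for \eqref{Ksvfle} --- the subgradient (Jensen-type) inequality $\vf(\gamma)-\vf(\sigma)\ge \vf'(\sigma)(\gamma-\sigma)$ applied with $\gamma=u(Y,t-\tau)$, $\sigma=u(X,t)$, integrated against the probability kernel $p(X,Y,\tau)\,dY$, and then weighted by $-\tau^{-1-s}$ --- is essentially the same as the paper's. Where you genuinely diverge is in the treatment of \eqref{Ksuq} and \eqref{Ksuqle}. The paper does \emph{not} prove \eqref{Ksuq} inside Proposition \ref{P:evcr}; it proves only \eqref{Ksvfle} there, reads off \eqref{Ksuqle} as the special case $\vf(t)=t^2$, and defers \eqref{Ksuq} to Proposition \ref{P:remK}, where it emerges because the remainder $\mathscr R^\K_{(s)}(u;\vf)$ in the Taylor-expansion argument vanishes identically when $\vf$ is a quadratic. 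You instead prove \eqref{Ksuq} directly by expanding $(u-u(X,t))^2$, using $P_\tau^\K 1=1$, regrouping into the two quantities that define $(-\K)^s$ via \eqref{Ks} and $\Gamma^\K_{(s)}$ via Definition \ref{nonlocalen}, and then obtain \eqref{Ksuqle} from \eqref{Ksuq} plus $\Gamma^\K_{(s)}(u)\ge 0$. Your route has the advantage of being self-contained (it does not depend on the later, more technical Proposition \ref{P:remK}) and makes the identity \eqref{Ksuq} entirely transparent as an algebraic rearrangement; the paper's route avoids any duplication by folding \eqref{Ksuq} into the general chain-rule computation. Both are valid, and both yield the same three conclusions.
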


We next consider the appropriate generalisation of  Proposition \ref{P:rem}.

\begin{proposition}\label{P:remK}
Let $U\subset \R$ be an interval and suppose that $\vf\in C^{1,1}(U) \cap C^{2,\alpha}_{loc}(U)$, for some $\alpha>0$. For any function $u\in \mathscr{S}(\mathbb{R}^{N+1})$ such that $u(\RNu)\subset U$, one has
\begin{align}\label{gugenK}
(-\K)^s \vf(u(X,t)) & = \vf'(u(X,t)) (-\K)^s u(X,t) - \vf''(u(X,t)) \G^\K_{(s)}(u)(X,t) 
\\
& + \mathscr R^\K_{(s)}(u;\vf)(X,t),
\notag
\end{align}
where for any $(X,t)\in \RNu$ we have 
\begin{equation}\label{limremK}
\underset{s\to 1^-}{\lim} \mathscr R^\K_{(s)}(u;\vf)(X,t) = 0.
\end{equation}
When $\vf(t) = a t^2 + bt + c$, we have $\mathscr R^\K_{(s)}(u;\vf) \equiv 0$.
\end{proposition}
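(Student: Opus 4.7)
The plan is to mimic the strategy for the classical fractional Laplacian (Proposition \ref{P:rem}) within the semigroup-based framework of $\Ks$. Starting from the integral definition \eqref{Ks}, I would insert a second-order Taylor expansion of $\vf$ at the base point $u(X,t)$ inside the expression $P^\K_\tau \vf(u)(X,t)$. Since $u\in \S$ has bounded range in $U$ and $\vf\in C^{2,\alpha}_{loc}(U)$, for every $Y\in \RN$ and $\tau>0$ we may write
\[
\vf(u(Y,t-\tau)) = \vf(u(X,t)) + \vf'(u(X,t))\bigl[u(Y,t-\tau)-u(X,t)\bigr] + \tfrac{1}{2}\vf''(u(X,t))\bigl[u(Y,t-\tau)-u(X,t)\bigr]^2 + R(Y,\tau),
\]
with Taylor remainder controlled by $|R(Y,\tau)|\le C\,|u(Y,t-\tau)-u(X,t)|^{2+\alpha}$, the constant $C$ depending on the $C^\alpha$-norm of $\vf''$ on the range of $u$. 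Multiplying by $p(X,Y,\tau)$, integrating in $Y$, and then against $-\frac{s}{\G(1-s)}\tau^{-1-s}d\tau$, the constant term cancels against the subtraction in \eqref{Ks}, the first-order term collapses to $\vf'(u(X,t))(-\K)^s u(X,t)$, and the quadratic term collapses, by Definition \ref{nonlocalen}, to $-\vf''(u(X,t))\G^\K_{(s)}(u)(X,t)$. What remains is by definition $\mathscr R^\K_{(s)}(u;\vf)(X,t)$. When $\vf(t)=at^2+bt+c$ the Taylor expansion is exact, so $R\equiv 0$ and the remainder vanishes identically; this handles the last assertion.

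The substantive step is then the limit \eqref{limremK}. I would write
\[
\mathscr R^\K_{(s)}(u;\vf)(X,t) = -\frac{s}{\G(1-s)}\int_0^\infty \tau^{-1-s}\int_{\RN} p(X,Y,\tau) R(Y,\tau)\,dY\,d\tau,
\]
split the $\tau$-integral at $\tau=1$, and estimate the two pieces separately. On $(1,\infty)$, the global bounds $|R|\le C$ (since $u\in L^\infty$) and $\int_{\RN} p(X,Y,\tau)\,dY=1$ give a contribution of order $\int_1^\infty \tau^{-1-s}d\tau = 1/s$. On $(0,1]$, I would combine the Lipschitz continuity of $u$ in both space and time, namely $|u(Y,t-\tau)-u(X,t)|\le L(|Y-X|+\tau)$, with Gaussian moment estimates for $p(X,Y,\tau)$ derived from \eqref{p}, to obtain
\[
\int_{\RN} p(X,Y,\tau)|R(Y,\tau)|\,dY \le C\bigl(\tau^{1+\alpha/2}+\tau^{2+\alpha}\bigr);
\]
this forces $\int_0^1 \tau^{\alpha/2-s}d\tau = 1/(1+\alpha/2-s)$, bounded uniformly as $s\to 1^-$. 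Consequently the double integral remains $O(1)$ uniformly in $s$ near $1^-$, while the prefactor $s/\G(1-s)\to 0$ since $\G(1-s)\to+\infty$, yielding \eqref{limremK}.

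The hardest step will be the moment estimate on the H\"ormander kernel in the generally degenerate setting: because $Q\ge 0$ may have a nontrivial null space and the anisotropic scaling of the covariance $\tau K(\tau)$ is dictated by its interaction with the drift matrix $B$, extracting the rate $O(\tau^{1+\alpha/2})$ for the $(2+\alpha)$-moment of $Y-X$ is not automatic. I expect this can be resolved either by invoking the kernel and moment estimates already established in \cite{GT,GTfi,GTiso}, or by a direct computation based on the explicit Gaussian form \eqref{p} together with $K(\tau)\to Q$ as $\tau\to 0^+$; the argument should be insensitive to the degeneracy of $Q$ since only the largest eigenvalue of $\tau K(\tau)$ controls the Gaussian spread of the kernel, and that eigenvalue is $O(\tau)$ for small $\tau$.
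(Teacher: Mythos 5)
Your plan coincides with the paper's proof: a second-order Taylor expansion of $\vf$ at $u(X,t)$ inside $P^\K_\tau\vf(u)(X,t)$, a split of the $\tau$-integral at $\tau=1$, with the tail killed by $1/\G(1-s)\to 0$ as $s\to 1^-$ and the near part controlled by the Lipschitz bound $|u(Y,t-\tau)-u(X,t)|\lesssim |Y-X|+\tau$ together with the $C^{2,\alpha}_{loc}$ hypothesis on $\vf''$. The one step you defer --- the moment bound $\int_{\RN}p(X,Y,\tau)\,|Y-X|^{2+\alpha}\,dY=O(\tau^{1+\alpha/2})$ for $\tau\in(0,1)$ --- is closed in the paper by precisely the direct computation you anticipate: the change of variable $Z=K^{-1/2}(\tau)(X-e^{-\tau B}Y)/\sqrt{\tau}$ combined with $|(e^{\tau B}-I_N)X|\lesssim\tau^{1/2}|X|$ and $|e^{\tau B}(\tau K(\tau))^{1/2}Z|\lesssim\tau^{1/2}|Z|$ for $\tau\le1$, which confirms your heuristic that only the $O(\tau)$ size of $\tau K(\tau)$ matters and that the degeneracy of $Q$ is irrelevant here.
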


In Section \ref{S:proofs} we present the proofs of Propositions \ref{P:tind}, \ref{P:evcr} and \ref{P:remK}. In view of Proposition \ref{P:tind}, Proposition \ref{P:rem} is contained in the more general Proposition \ref{P:remK}, and we thus omit its proof.

\vskip 0.2in

\noindent \textbf{Acknowledgment:} We thank Giulio Tralli for his interest in this note and for helpful discussions.


\section{Proof of the results}\label{S:proofs}

In this section we present the proofs of the results.  
We begin with the
 
\begin{proof}[Proof of Proposition \ref{P:tind}]
Under the assumptions of the proposition we know that $p(X,Y,t)=(4\pi t)^{-\frac{N}{2}}\exp\big(-\frac{|X-Y|^{2}}{4t}\big)$.
Since from \eqref{Pt}, \eqref{PKtau} we have $P^\K_\tau((u - u(X,t))^2)(X,t) = P_\tau((v - v(X))^2)(X)$, we find 
\begin{align*}
\G^\K_{(s)}(u)(X,t) & = \frac{s}{2\Gamma(1-s)}\int_{0}^{\infty} \tau^{-1-s}P_{\tau}((v-v(X))^{2})(X)d\tau
\\
&= \frac{s 2^{-N} \pi^{-\frac{N}{2}}}{2\Gamma(1-s)}\int_{\mathbb{R}^{N}} (v(Y)-v(X))^{2} \int_{0}^{+\infty}\tau^{-1-s-\frac{N}{2}}\exp\big(-\frac{|X-Y|^{2}}{4\tau}\big) d\tau dY.
\end{align*}
Now, a simple computation gives
\begin{align*}
&\int_{0}^{+\infty}\tau^{-1-s-\frac{N}{2}}\exp\big(-\frac{|X-Y|^{2}}{4\tau}\big)d\tau = \frac{2^{N+2s}}{|X-Y|^{N+2s}}\Gamma\left(\frac{N+2s}2\right).
\end{align*}
Substituting this identity in the above equation,  recalling \eqref{gnsfin} which gives
\begin{align*}
\gamma(N,s)=\frac{s 2^{2s} \Gamma(s+\frac{N}{2})}{\pi^{\frac{N}{2}}\Gamma(1-s)},
\end{align*}
and keeping \eqref{gamma2} in mind,
we reach the desired conclusion $\G^\K_{(s)}(u)(X,t) = \G_{(s)}(v)(X)$.

\end{proof}

Next, we present the 

\begin{proof}[Proof of Proposition \ref{P:evcr}]
We only prove \eqref{Ksvfle}, since \eqref{Ksuq} will follow from Proposition \ref{P:remK}, and \eqref{Ksuqle} is a trivial consequence of \eqref{Ksvfle}.
By the assumption of convexity of $\vf$ we have for any $\gamma, \sigma \in \mathbb{R}$,
\[
\phi'(\sigma)(\gamma -\sigma) \le \phi(\gamma)-\phi(\sigma).
\]	
Applying this inequality with $\gamma=u(Y,t-\tau)$ and $\sigma=u(X,t)$, we obtain
\begin{align*}
	P_{\tau}^{\K}\phi\circ u(X,t) - \phi\circ u(X,t)& = \int_{\mathbb{R}^{N}}p(X,Y,\tau)\big(\phi(u(Y,t-\tau))-\phi(u(X,t))\big)dY\geq\\
	&\geq \phi'(u(X,t))\int_{\mathbb{R}^{N}}p(X,Y,\tau)\big(u(Y,t-\tau)-u(X,t)\big)dY=\\
	&=\phi'(u(X,t))\big(P_{\tau}^{\K}u(X,t)-u(X,t)\big).
\end{align*}	
Multiplying both sides of the latter inequality by $- \frac{s}{\G(1-s)}\tau^{-1-s}$ and integrating in $\tau$ over $(0,\infty)$, if we keep \eqref{Ks} in mind we immediately obtain the desired conclusion \eqref{Ksvfle}.

\end{proof}

Finally, we give the 

\begin{proof}[Proof of Proposition \ref{P:remK}] For every $(X,t)\in \RNu$ fixed, we have
\begin{align}\label{one}
P_{\tau}^{\K}\phi(u)(X,t) -\phi(u)(X,t)&=\int_{\mathbb{R}^{N}}p(X,Y,\tau)\big(\phi(u(Y,t-\tau))-\phi(u(X,t))\big)dY.
\end{align}	
By the Taylor expansion of $\phi(u)$ we can write
\begin{align*}
& \phi(u(Y,t-\tau))-\phi(u(X,t))=\phi'(u(X,t))(u(Y,t-\tau)-u(X,t))
\\ 
& +\frac 12 \phi''(u(X,t))(u(Y,t-\tau)-u(X,t))^{2}
+ \frac 12 (\phi''(\tilde{u}) - \phi''(u(X,t)))(u(Y,t-\tau)-u(X,t))^{2},
\end{align*}	
where $\tilde u$ is a point (depending on $X, Y, t, \tau$) between $u(X,t)$ and $u(Y,t - \tau)$. Substituting in the above identity, we find 
\begin{align*}
 P_{\tau}^{\K}\phi(u)(X,t)-\phi(u)(X,t) & =\phi'(u)(X,t)(P^\K_\tau u(X,t) - u(X,t))  
\\
& + \frac 12 \phi''(u)(X,t) P^\K_\tau (u -u(X,t))^{2}(X,t)
\\
& + \frac{1}{2}\int_{\mathbb{R}^{N}}p(X,Y,\tau)(\phi''(\tilde{u})-\phi''(u(X,t)))(u(Y,t-\tau)-u(X,t))^{2}dY.
\end{align*}
Multiplying both sides of the latter equation by $- \frac{s}{\G(1-s)}\tau^{-1-s}$ and integrating in $\tau$ over $(0,\infty)$, if we keep  \eqref{Ks} and \eqref{nonlocalen} in mind we obtain \eqref{gugenK}, where we have let
\[
\mathscr R^\K_{(s)}(u;\vf)(X,t) = -\frac{s}{2\Gamma(1-s)}\int_{0}^{+\infty}\frac{1}{\tau^{1+s}}\int_{\mathbb{R}^{N}}p(X,Y,\tau)(\phi''(\tilde{u})-\phi''(u(X,t)))(u(Y,t-\tau)-u(X,t))^{2}dY.
\]
From the latter equation it is obvious that, when $\vf(t) = a t^2 + bt + c$, then $\vf'' \equiv 2a$, and therefore we have $\mathscr R^\K_{(s)}(u;\vf)(X,t) \equiv 0$. Combined with \eqref{gugenK}, this proves in particular the chain rule \eqref{Ksuq}.
Finally, we need to show \eqref{limremK}.
With this objective in mind, we write
\begin{align*}
\mathscr R^\K_{(s)}(u;\vf)(X,t)&=-\frac{s}{2\Gamma(1-s)}\int_{0}^{1} \frac{1}{\tau^{1+s}}\int_{\mathbb{R}^{N}}p(X,Y,\tau)(\phi''(\tilde{u})-\phi''(u(X,t)))(u(Y,t-\tau)-u(X,t))^{2}dYd\tau
\\
&-\frac{s}{2\Gamma(1-s)}\int_{1}^{\infty} \frac{1}{\tau^{1+s}}\int_{\mathbb{R}^{N}}p(X,Y,\tau)(\phi''(\tilde{u})-\phi''(u(X,t)))(u(Y,t-\tau)-u(X,t))^{2}dYd\tau
\\
&=I(X,t;s)+II(X,t;s).
\end{align*}
Since by assumption $\vf''\in L^\infty(U)$, using the fact that $\int_{\RN}p(X,Y,\tau) dY = 1$, we easily find for every $(X,t)\in \RNu$,
\begin{align*}
|II(X,t;s)|\le&\frac{4s}{\Gamma(1-s)} ||\phi''||_{\infty}||u||_{\infty} ^{2}\int_{1}^{\infty} \frac{1}{\tau^{1+s}}\int_{\mathbb{R}^{N}}p(X,Y,\tau)dY d\tau
\\
=&\frac{4s}{\Gamma(1-s)} ||\phi''||_{\infty}||u||_{\infty} ^{2}\int_{1}^{\infty} \frac{1}{\tau^{1+s}}d\tau =  \frac{4}{\Gamma(1-s)} ||\phi''||_{\infty}||u||_{\infty} ^{2}\ \longrightarrow\ 0
\end{align*}
when $s\rightarrow1^{-}$. Here, we have used the fact that $\G(1-s)\cong 1/(1-s)$ as $s\to 1^-$. Estimating $I(X,t;s)$ requires some additional care. Because $u\in \mathscr{S}(\mathbb{R}^{N+1})$, Taylor's formula gives
\[
|u(Y,t-\tau) - u(X,t)| \le  |\nabla_{(X,t)} u(\tilde{Y},\tilde{\tau})| (|Y-X| + \tau) \le C_{1} (|Y-X| + \tau),
\]
where $C_{1}=||\nabla_{(X,t)}u||_{\infty}$ and $\tilde Y$, $\tilde \tau$ respectively are a point on the segment joining $X$ to $Y$, and a number in the interval joining $t$ and $t-\tau$. The hypothesis $\phi \in C^{2,\alpha}_{loc}(U)$ thus gives
\begin{align*}
	|\phi''(\tilde{u})-\phi''(u(X,t))|&\le C_{2} |u(Y,t-\tau)-u(X,t)|^{\alpha} \le C_{3} (|Y-X|^{\alpha}+\tau^{\alpha}),
\end{align*}
where $C_{2}, C_{3}$ are two positive constants. Then, there exist a consant $C_4>0$ such that
\begin{align*}
   |u(Y,t-\tau) - u(X,t)|^{2}\,|\phi''(\tilde{u})-\phi''(u(X,t))|\le C_4 \big(|Y-X|^{2+\alpha}+|Y-X|^{2}\tau^{\alpha}+|Y-X|^\alpha \tau^{2}+\tau^{2+\alpha}\big).
\end{align*}
Using this, and \eqref{p}, we find 
\begin{align*}
& |I(X,t;s)|  \le \frac{s C_4}{\G(1-s)}\int_{0}^{1}\frac{1}{\tau^{1+s}\left(\operatorname{det}(\tau K(\tau))\right)^{1/2}}\int_{\mathbb{R}^{N}} \exp\left( - \frac{|K(\tau)^{-1/2}(Y-e^{tB} X)|^2}{4\tau}\right)
\\
& \big(|Y-X|^{2+\alpha}+|Y-X|^{2}\tau^{\alpha}+|Y-X|^{\alpha}\tau^{2}+\tau^{2+\alpha}\big)dYd\tau.
\end{align*}
We now make the change of variable $Z=K^{-1/2}(\tau)(X-e^{-\tau B}Y)/\sqrt \tau$ in the integral over $\RN$. This gives $Y = e^{\tau B} (X - (\tau K(\tau))^{1/2}Z)$, and therefore $dY = (\det(\tau K(\tau)))^{1/2} e^{\tau \tr B}dZ$.
Notice also that, in terms of the new variable $Z$, one has
\begin{align*}
|Y-X| &  = |(e^{\tau B} - I_N)X + e^{\tau B}(\tau K(\tau))^{1/2}Z| \le |(e^{\tau B} - I_N)X|+|e^{\tau B}(\tau K(\tau))^{1/2}Z|.
\end{align*}
Since for $0<\tau<1$ we easily find
\[
|(e^{\tau B} - I_N)X| \le C |X| \tau \le C |X| \tau^{1/2},\ \ \ \ \ |e^{\tau B}(\tau K(\tau))^{1/2}Z|\le C |Z| \tau^{1/2},
\]
for some constant $C>0$, we have
\[
|(e^{\tau B} - I_N)X + e^{\tau B}(\tau K(\tau))^{1/2}Z| \le C \max\{1,|X|\}(1+|Z|) \tau^{1/2}.
\]
It is then clear that there exists a constant $C_5 >0$, depending on the point $X$, and on $N,\alpha$, such that, in terms of the new variable $Z$, one has for all $Z\in \RN$ and $\tau\in (0,1)$ 
\[
|Y-X|^{2+\alpha}+|Y-X|^{2}\tau^{\alpha}+|Y-X|^{\alpha}\tau^{2}+\tau^{2+\alpha} \le C_5 (1+ |Z|^{2+\alpha}) \tau^{1+\frac{\alpha}2}.
\]
We conclude from this estimate that
\begin{align*}
|I(X,t;s)| & \le \frac{s C^{\star\star}}{\G(1-s)}\int_{0}^{1}\frac{\tau^{1+\frac{\alpha}2}}{\tau^{1+s}} d\tau \int_{\mathbb{R}^{N}} (1+ |Z|^{2+\alpha}) e^{- |Z|^2} dZ,
\\
& \le \frac{s C_6}{\G(1-s)}\frac{1}{(1-s) + \frac{\alpha}2}\ \longrightarrow\ 0,
\end{align*}
as $s\to 1^-$. This proves \eqref{limremK}.

\end{proof}

\bibliographystyle{amsplain}

\end{document}